\newtheorem{definition}{Definition}
\newtheorem{theorem}{Theorem}
\newtheorem{remark}{Remark}
\newtheorem{corollary}{Corollary}
\renewcommand{\P}{\mathbb{P}}
\newcommand{\E}{\mathbb{E}}
\newcommand{\Rbb}{\mathbb{R}}
\newcommand{\Bcal}{\mathcal{B}}
\newcommand{\goesto}{\rightarrow}
\newcommand{\set}[1]{\left\{#1\right\}}
\title{\LARGE \bf An Infinite Dimensional Model for A Single Server
  Priority Queue}
\author{Neal Master, Zhengyuan Zhou, and Nicholas
  Bambos
  \thanks{N. Master and Z. Zhou are each supported by a Stanford
    Graduate Fellowship (SGF) in Science \&
    Engineering.}
  \thanks{N. Master, Z. Zhou, and N. Bambos are with the Department of
    Electrical Engineering, Stanford University, Stanford, CA, 94305,
    USA.  {\tt\small\{nmaster, zyzhou, bambos\}@stanford.edu}}}
\begin{document}

\maketitle
\thispagestyle{empty}
\pagestyle{empty}

\begin{abstract}
  We consider a Markovian single server queue in which customers are
  preemptively scheduled by exogenously assigned priority levels. The
  novelty in our model is that the priority levels are randomly
  assigned from a continuous probability measure rather than a
  discrete one. Because the priority levels are drawn from a
  continuum, the queue is modeled by a measure-valued stochastic
  process. We analyze the steady state behavior of this process and
  provide several results. We derive a measure that describes the
  average distribution of customer priority levels in the system; we
  provide a formula for the expected sojourn time of a customer as a
  function of his priority level; and we provide a formula for the
  expected waiting time of a customer as a function of his priority
  level. We interpret these quantitative results and give a
  qualitative understanding of how the priority levels affect
  individual customers as well as how they affect the system as a
  whole. The theoretical analysis is verified by simulation. We also
  discuss some directions of future work.
\end{abstract}

\section{Introduction}
Priority queueing models are useful in a variety of different
applications. In communication engineering, priority queues are used
to study networks with differentiated levels of quality of
service~\cite{Shin_2001,Semeria_2001}. In healthcare, priority
queueing models are used to study and understand triage
policies~\cite{Green_2006} in which certain types of patients are
prioritized over others. In mathematical finance, limit orders are
given priority for being matched with market orders according their
price and time of arrival at the exchange~\cite{Cont_2010}. Because of
the breadth of the potential applications, many priority queueing
models exist; see \cite{Jaiswal_Book} for a standard reference on
priority queueing models.

In this paper, we formulate and analyze a single server Markovian
priority queueing model with the following novelty: we consider a
\emph{continuum} of priority levels. Because priority levels are
uncountably infinite, unlikely previously studied models,
e.g. \cite{Miller_1981, Whitt_1971}, our model requires an infinite
dimensional state. As a result, standard Markov chain techniques that
apply when there are finitely many priority levels,
e.g. \cite{White_1958}, do not apply. Although we restrict ourselves
to a Markovian model with Poisson arrivals and exponential service
times, the key difference is that our state is a function with an
uncountable domain rather than merely being a finite dimensional
vector.

Because of the complexity that arises due to this infinite dimensional
state, we opt to simplify other aspects of the model. In particular,
we assume that all customers experience the same service rate
regardless of their priority level. This differs from previous work,
e.g. \cite{Takacs_1964}, and restricts our attention to models in
which differing priority levels affect the order in which jobs are
scheduled but not the service rates that they experience. We also
focus on the case of preemptive scheduling as in \cite{Yeo_1963,
  Chang_1965} rather than non-preemptive scheduling as in
\cite{Bagchi_1985, Kapadia_1984}. This simplifies our analysis because
with preemptive scheduling we know that the customer who is being
served is always the customer with the highest priority.

We note that the use of function-valued or measure-valued stochastic
processes is itself not novel to queueing. Infinite dimensional models
have been used to study the earliest-deadline-first
discipline~\cite{Doytchinov_2001}, the processor-sharing
discipline~\cite{Gromoll_2004}, as well as
many-server~\cite{Pang_2010, Kaspi_2013} and infinite
server~\cite{Reed_2015} queueing models. In these contexts, the
measure encodes dynamic properties of the jobs in the system such as
their residual service times. In our model, because the priority
levels are static, the state is constant between arrival and departure
events. Consequently, our model is far more tractable. In fact, these
other models focus on diffusion approximations while we present only
exact results.

The measure-valued queueing model that is most closely related to our
model can be found in \cite{Aldous_1986}. This model was originally
proposed to understand fragmentation is disk storage
\cite{Coffman_1985}. The model consists of an infinite server system
in which servers (rather than customers) are ranked. This can be seen
as servers with countably infinite priority levels. Our model is quite
different because we study a single server system in which customers
have continuous priority levels, but because both models have static
priority levels, our styles of analysis are somewhat similar. The most
significant divergence between our results is that we are able to
provide exact results while \cite{Aldous_1986} focuses on diffusion
approximations.

With this background and motivation in mind, the remainder of the
paper is organized as follows. In Section~\ref{sec:model} we fully
describe our model and discuss different choices for the state
representation. In Section~\ref{sec:theory} we analyze the steady
state behavior of the system. In particular, we derive a measure that
tells us the average distribution of customer priority levels in the
system.  We also derive a formula for the average sojourn time of a
customer as a function of his priority level. In Section~\ref{sec:sim}
we provide a simulation that verifies our analytical formulae. In
Section~\ref{sec:future} we discuss some potential directions of
future work and we conclude in Section~\ref{sec:conclusions}.

\section{Model Formulation\label{sec:model}}
In this section we formally describe our model. We explain our
assumptions and highlight the fact that certain seemingly limiting
assumptions are actually without loss of generality. We present three
infinite dimensional state representations and explain their
equivalence.

We consider a single server queue with an infinite buffer. Customers
arrive according to a Poisson process with rate $\rho > 0$. We do not
assume any upper bound on $\rho$. Customers have independent and
identically distributed (IID) exponential service times. Time can be
scaled arbitrarily so we assume that the service times have unit mean.
In addition, customers have IID priority levels that are uniformly
distributed on the unit interval. The priority levels are independent
of all other random quantities in the model. Customers are scheduled
preemptively according to their priorities: the highest priority
customer will always be served even if this interrupts the service of
another customer. An interrupted customer will wait in the queue until
it is rescheduled for service, i.e. when it has the highest priority
level of all the customers in the system. In summary, we have an
$M/M/1$ queue (not necessarily stable) in which customers are
preemptively scheduled according to exogenously assigned IID $U([0,
1])$ priority levels.

Note that because the customers are scheduled based on their relative
order rather than their absolute value, the fact that the priority
levels are drawn from $U([0, 1])$ (as opposed to some other
distribution) is actually without loss of generality. Because the
scheduling decisions only depend on the relative order of the priority
levels, the dynamics would be unchanged if the priorities were
transformed by any monotone map. In particular, suppose we want the
priority levels to be drawn from some other distribution with
cumulative distribution function (CDF) $F(\cdot)$. Consider customers
$i$ and $j$ with priority levels $p_i$ and $p_j$ drawn from
$U([0,1]$. Let $\tilde p_i = F^{-1}(p_i)$ and $\tilde p_j =
F^{-1}(p_j)$ where $F^{-1}(\cdot)$ is the quantile function associated
with $F(\cdot)$:
\begin{equation}
  F^{-1}(p) = \inf \set{x \in \Rbb : p \leq F(x)}
\end{equation}
If $p_i > p_j$ then we will also have $\tilde p_i \geq \tilde p_j$. We
also have that $\tilde p_i$ and $\tilde p_j$ are distributed according
to the CDF $F(\cdot)$ \cite[Theorem~2.1]{Devroye_1986}. So if
$F(\cdot)$ is strictly increasing then using $\tilde p_i$ and $\tilde
p_j$ yields the same scheduling dynamics as using $p_i$ and $p_j$. If
$F(\cdot)$ is not strictly increasing, then with non-zero probability
we could have $\tilde p_i = \tilde p_j$. However, in this situation
customers $i$ and $j$ are indistinguishable and these ties can be
broken in an arbitrary fashion, e.g. randomly. Consequently, our model
encompasses arbitrary distributions of priority levels. For
simplicity, we will focus having priority levels drawn from $U([0,
1])$.

We also note that because of the memorylessness property of the
exponential distribution, after a customer is preempted its residual
service time is still exponentially distributed with unit mean. As a
result, the state does not need to include the residual service time
of each customer in the system, just the priority level of each
customer. Since we have a continuum of priority levels, the state
needs to encode the priority level of each customer in the system. It
is convenient to encode this list of priority levels as a point
measure on $[0, 1]$. Let $\Bcal([0, 1])$ be the $\sigma$-algebra of
Borel sets on $[0, 1]$. Given $B \in \Bcal([0, 1])$ let $x_t(B)$ be
the number of customers in the system at time $t$ with priority levels
contained in $B$. In other words, if there are $N$ customers in the
system at time $t$ and their priority levels are $\set{p_1, \hdots,
  p_N} \subset [0, 1]$, then
\begin{equation}
  x_t = \sum_{i=1}^N \delta_{p_i}
\end{equation}
where $\delta_z$ denotes a Dirac measure at $z \in [0, 1]$. 

We can equivalently represent the state by either the (non-normalized)
CDF or the complementary CDF:
\begin{equation}
  X_t(p) = x_t([0, p]),\quad \bar X_t(p) = x_t((p, 1])
\end{equation}
The equivalence of these state representations follows from the fact
that $\set{[0, p] : p \in [0, 1]}$ and $\set{(p, 1] : p \in [0, 1]}$
each form $\pi$-systems that generate $\Bcal([0,1])$. Because
$x_t(\cdot)$ is a counting measure, we know that $x_t([0, 1])$ is
finite for all $t$. Hence, we can apply the $\pi$-$\lambda$ Theorem to
show that $\set{X_t(p) : p \in [0, 1]}$ and $\set{\bar X_t(p) : p \in
  [0,1]}$ each uniquely define $x_t(\cdot)$. The definitions of
$\pi$-systems and $\lambda$-systems along with the method of uniquely
extending a measure from a $\pi$-system to a $\sigma$-algebra are
standard in measure theory. For a reference, see
\cite[Chapter~3]{Billingsley}.

\section{Some Theoretical Results\label{sec:theory}}
We now analyze the steady state behavior of the system. First we
characterize the equilibrium distribution of $\bar X_t(p)$ for each $p
\in [0, 1]$. We provide a corollary that partially characterizes the
equilibrium distribution of $x_t(\cdot)$. We then provide a formula
for the expected sojourn time of a customer as a function of its
priority level. As a small corollary to this we provide a formula for
the expected waiting time of a customer as a function of its priority
level. As in the previous section, we rely on standard results
regarding the extension of measures from $\pi$-systems to
$\sigma$-algebras which can be found in \cite[Chapter~3]{Billingsley}.

\begin{theorem}
  \label{thrm:X_bar}
  Fix any $p \in [0, 1]$, $\bar X_t(p)$ converges weakly to a random
  variable $\bar X(p)$. If $(1-p)\rho < 1$, then $\bar X(p)$ is a
  geometrically distributed random variable on the non-negative
  integers with
  \begin{equation}
    \E[\bar X(p)] = \frac{(1-p)\rho}{1 - (1-p)\rho}.
  \end{equation}
  In other words, if $(1-p)\rho < 1$ then
  \begin{equation}
    \lim_{t \goesto \infty} \P(\bar X_t(p) = k) = (1 - (1-p)\rho)((1 - p)\rho)^k
  \end{equation}
  for each non-negative integer $k$.

  If $(1-p)\rho \geq 1$, then $\bar X(p) = \infty$ almost surely.
\end{theorem}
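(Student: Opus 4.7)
The plan is to reduce the study of $\bar X_t(p)$ to that of a standard $M/M/1$ queue and then invoke well-known results. The key observation is that customers with priority strictly above $p$ are completely oblivious to customers with priority in $[0, p]$: by Poisson thinning, the arrivals with priority in $(p, 1]$ form an independent Poisson process of rate $(1-p)\rho$; and by preemptive priority, whenever $\bar X_t(p) \geq 1$ the customer in service is necessarily the system-wide highest-priority customer, which must lie in $(p, 1]$. Hence departures from the population counted by $\bar X_t(p)$ occur at rate $1$ whenever $\bar X_t(p) \geq 1$ and at rate $0$ when $\bar X_t(p) = 0$.

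From here I would argue, using memorylessness of the exponential service distribution and the independence of the thinned arrival streams, that $\set{\bar X_t(p) : t \geq 0}$ is itself a continuous-time birth-death Markov chain on $\Zbb_{\geq 0}$ with birth rate $\lambda := (1-p)\rho$ and death rate $\mu := 1$. This is literally the generator of an $M/M/1$ queue. When $(1-p)\rho < 1$ this chain is positive recurrent; solving the standard detailed balance equations $\pi_k \lambda = \pi_{k+1}\mu$ yields $\pi_k = (1-(1-p)\rho)((1-p)\rho)^k$, and convergence in distribution from any initial state is the classical ergodic theorem for irreducible positive-recurrent continuous-time Markov chains. The mean of this geometric is then $(1-p)\rho / (1 - (1-p)\rho)$ by direct computation.

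For the case $(1-p)\rho \geq 1$ I would appeal to the standard fact that an $M/M/1$ queue with traffic intensity $\geq 1$ is not positive recurrent: when $\lambda > \mu$ the embedded discrete skeleton is a transient random walk with positive drift, so $\bar X_t(p) \goesto \infty$ almost surely; when $\lambda = \mu$ the chain is null recurrent and again $\bar X_t(p) \goesto \infty$ in probability (in fact weakly to the degenerate limit at $\infty$). Either way $\bar X(p) = \infty$ almost surely, which gives weak convergence to the point mass at $\infty$ on the one-point compactification.

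The only genuinely delicate step is the Markov reduction in the first paragraph, i.e. rigorously certifying that $\bar X_t(p)$ is Markov in its own right rather than merely a projection of the full measure-valued process $x_t$. This reduces to checking that (i)~the thinned arrival process is independent of the below-$p$ population, which follows from independence of priority assignments across arrivals, and (ii)~the service completion time of the currently served above-$p$ customer is exponential(1) independent of everything else, which follows from memorylessness regardless of whether that customer was just preempted into service. I expect that step to be the main obstacle to write cleanly; the remaining computations are routine.
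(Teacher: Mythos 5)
Your proposal follows essentially the same approach as the paper: both reduce $\bar X_t(p)$ to the queue-length process of an $M/M/1$ queue with arrival rate $(1-p)\rho$ by observing that, under preemptive priority, the above-$p$ customers are unaffected by the below-$p$ customers and arrive as a thinned Poisson stream. You spell out the birth--death generator, detailed balance, and the transient/null-recurrent dichotomy explicitly, whereas the paper simply cites a standard $M/M/1$ reference, but the underlying argument is identical.
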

\begin{proof}
  The key is to notice that because of the preemptive scheduling, the
  customers with priority levels in $(p, 1]$ are not affected in any
  way by customers with lower priority\footnote{This same observation
    has been useful for other priority queueuing models
    \cite{Burke_1962}.}, i.e. the customers with priority levels in
  $[0, p]$. In addition, because the priority levels are independent
  of the inter-arrival times, the customers with priority levels in
  $(p, 1]$ arrive according to a Poisson process with rate
  $(1-p)\rho$. As a result, $\bar X_t(p)$ is stochastically equivalent
  to the population in an $M/M/1$ queue with unit service rate and
  arrival rate $(1-p)\rho$. As a result, $\bar X_t(p)$ converges
  weakly to a geometric random variable on the non-negative integers
  with the given mean \cite[Chapter~3]{Kleinrock}.  Because there is
  no upper bound on $\rho$, it is possible that $(1-p)\rho \geq 1$. In
  this case, the equivalent $M/M/1$ queue is not stable and hence
  $\bar X_t(p)$ diverges to infinity.
\end{proof}

\begin{corollary}
  \label{cor:x}
  Fix $B \in \Bcal([0, 1])$. Then $x_t(B)$ converges weakly to a
  random variable $x(B)$ with mean
  \begin{equation}
    \mu(B) = \E[x(B)] = \int_B m(p) dp
  \end{equation}
  where $m(\cdot)$ is defined as follows:
  \begin{equation}
    \begin{split}
      m(p) = \left\{
        \begin{array}{cl}
          \frac{\rho}{(1 - (1-p)\rho)^2} &, (1-p)\rho < 1\\
          \infty &, (1-p)\rho \geq 1
        \end{array}
      \right.
    \end{split}
  \end{equation}
\end{corollary}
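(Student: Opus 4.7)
The plan is to bootstrap Theorem~\ref{thrm:X_bar} from the generating $\pi$-system $\{(p,1]: p \in [0,1]\}$ to all Borel sets, and to recover the density $m(p)$ by differentiating the stationary mean formula. The mean computation will be essentially a clean application of the fundamental theorem of calculus; the subtler part will be transporting weak convergence from half-open intervals to arbitrary Borel sets and handling the unstable regime.

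First I would treat a half-open interval $B = (a,b]$ with $(1-a)\rho < 1$. Writing $x_t((a,b]) = \bar X_t(a) - \bar X_t(b)$ and invoking Theorem~\ref{thrm:X_bar} together with joint convergence of the bivariate Markov chain $(\bar X_t(a),\bar X_t(b))$ yields a weak limit $x((a,b])$. A direct differentiation gives
\begin{equation}
  -\frac{d}{dp}\frac{(1-p)\rho}{1-(1-p)\rho} = \frac{\rho}{(1-(1-p)\rho)^2} = m(p),
\end{equation}
so by the fundamental theorem of calculus
\begin{equation}
  \E[x((a,b])] = \E[\bar X(a)] - \E[\bar X(b)] = \int_a^b m(p)\,dp.
\end{equation}

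To pass to an arbitrary $B \in \Bcal([0,1])$, I would compare the set functions $B \mapsto \E[x(B)]$ and $B \mapsto \int_B m(p)\,dp$. Each extends uniquely to a measure on $\Bcal([0,1])$, and by the previous step they agree on the $\pi$-system of half-open intervals, so the $\pi$-$\lambda$ theorem (as invoked in Section~\ref{sec:model}) forces them to agree on all of $\Bcal([0,1])$. Weak convergence of $x_t(B)$ for general $B$ then follows by sandwiching $B$ between finite disjoint unions of half-open intervals; because $x_t$ is a counting measure the approximation closes in the limit. If instead $(1-p)\rho \geq 1$ on a subset of $B$ of positive Lebesgue measure, then $x_t(B) \geq \bar X_t(p)$ for any such $p$, so Theorem~\ref{thrm:X_bar} forces $x(B) = \infty$ almost surely, while simultaneously $\int_B m(p)\,dp = \infty$, matching the piecewise definition of $m$. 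The main obstacle I anticipate is the careful justification of joint convergence for $(\bar X_t(a),\bar X_t(b))$, which I expect to follow from positive recurrence of the underlying $M/M/1$ sub-structure whenever $(1-a)\rho < 1$.
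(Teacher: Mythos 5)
Your overall strategy mirrors the paper's own proof: decompose $x_t$ over intervals via $\bar X_t$, differentiate the stationary mean from Theorem~\ref{thrm:X_bar} to recover the density $m(\cdot)$, and extend to all of $\Bcal([0,1])$ via the $\pi$-$\lambda$ theorem. You are in fact slightly more careful than the paper in one respect: you explicitly flag that joint weak convergence of $(\bar X_t(a),\bar X_t(b))$ is what is needed to pass to the limit of $x_t((a,b])=\bar X_t(a)-\bar X_t(b)$, a point the paper silently asserts, and you correctly use half-open intervals $(a,b]$, which actually match the definition $\bar X_t(p)=x_t((p,1])$, whereas the paper loosely writes $[a,b]$.

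However, your treatment of the unstable regime contains a genuine gap. You claim that if $(1-p)\rho\geq 1$ for some $p$ in a positive-measure subset of $B$, then $x_t(B)\geq\bar X_t(p)$. This inequality fails unless $(p,1]\subseteq B$, which generally does not hold: $\bar X_t(p)$ counts customers with priorities in $(p,1]$, essentially none of which need lie in $B$. Concretely, for $B=(a,b]$ with $b<1$ and $p\in(a,b)$, $\bar X_t(p)$ includes all customers with priority in $(b,1]$, which contribute nothing to $x_t(B)$. The correct argument is different: take an interval $(a,b]\subseteq B\cap[0,\,1-1/\rho]$ with $b-a>0$; customers with priority in $(a,b]$ arrive at rate $(b-a)\rho>0$ but are served only when $\bar X_t(b)=0$, and since the $(b,1]$ subsystem is an $M/M/1$ queue with load $(1-b)\rho\geq 1$ (null recurrent or transient), the fraction of time the server is free for $(a,b]$-customers vanishes, so $x_t((a,b])\to\infty$ and $\E[x(B)]=\infty$. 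Relatedly, you gloss over the $\sigma$-finiteness caveat the paper raises: when $\rho\geq 1$ the limit measure is not $\sigma$-finite on $\Bcal([0,1])$, so uniqueness from the $\pi$-$\lambda$ theorem holds only after restricting to $\Bcal((p^*,1])$, with the infinite part handled separately by the argument above.
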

\begin{proof}
  The previous theorem tells us that if $B = [a, b]$ for some $0 \leq
  a < b \leq 1$, then $x_t(B) = \bar X_t(a) - \bar X_t(b)$. Since
  $x_t([a, b])$ converges weakly to $\bar X(a) - \bar X(b)$,
  performing the integration gives us the same result as in the
  previous theorem. Indeed, note that for $p$ such that $(1-p)\rho <
  1$,
  \begin{equation}
    m(p) = -\frac{d}{dp}\left\{\frac{(1-p)\rho}{1 - (1-p)\rho}\right\} = -\frac{d}{dp} \E[\bar X(p)]
  \end{equation}

  Now note that intervals of this form are a $\pi$-system that
  generates $\Bcal([0, 1])$. Consequently, if $\rho < 1$ then
  $\mu([0,1]) < \infty$ and so this defines a unique measure on
  $\Bcal([0, 1])$. On the other hand, if $\rho \geq 1$, we can still
  extend the measure from the $\pi$-system to $\Bcal([0, 1])$, but
  uniqueness is no longer guaranteed. However, we can apply the same
  reasoning as above to define a unique measure on
  $\Bcal([1-1/\rho,1])$ where $\mu(\cdot)$ is finite. The fact that
  $\mu(B) = \infty$ for any $B$ such that $B \cap [0,1 - 1/\rho]$ has
  non-zero Lebesgue measure follows from the instability argument in
  the previous theorem.  Hence, regardless of the value of $\rho$, we
  can conclude that the expression for the mean equilibrium behavior
  of $x_t(B)$ holds for any $B \in \Bcal([0, 1])$.
\end{proof}

\begin{remark}
  Although Theorem~\ref{thrm:X_bar} gives the full distribution of
  $\bar X(p)$ for any $p$, Corollary~\ref{cor:x} only characterizes
  the first-order statistics of $x(\cdot)$. This is because
  Theorem~\ref{thrm:X_bar} does not characterize the joint
  distribution of $\set{\bar X(p) : p \in [0, 1]}$. Because we only
  have the marginal distributions, i.e. the distribution of $\bar
  X(p)$ for a single $p$, the higher-order statistical behavior of
  $x(\cdot)$ does not follow from Theorem~\ref{thrm:X_bar}.
\end{remark}

Because service can be preempted and hence customers can enter service
multiple times, we formally define the terms ``sojourn time'' and
``waiting time''. In particular, we note that the amount of time a
customer spends in service before being preempted is considered
waiting.
\begin{definition}
  A customer's sojourn time is the amount of time from when the customer arrives
  to when it departs after completing service.
\end{definition}
\begin{definition}
  A customer's waiting time is the amount of time from when the customer arrives
  to the beginning of the last time the customer enters service.
\end{definition}

\begin{theorem}
  Fix any $p \in [0, 1]$ and let $s(p)$ be the expected sojourn time
  for a customer with priority $p$ in steady state. Then if $(1-p)\rho
  < 1$ then 
  \begin{equation}
    s(p) = \frac{1}{(1 - (1-p)\rho)^2}
  \end{equation}
  and if $(1-p)\rho \geq 1$ then $s(p) = \infty$.
\end{theorem}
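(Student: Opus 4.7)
The plan is to apply Little's Law to well-chosen classes of customers in order to invert Corollary~\ref{cor:x} into a statement about sojourn times. Since priority levels are static and exogenously assigned, each customer belongs to a fixed priority class for its entire stay, so the usual $L = \lambda W$ relation is available for any priority-defined sub-population in steady state.

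First I would fix an interval $[a, b] \subseteq [0, 1]$ lying entirely in the stable regime, i.e.\ with $(1-a)\rho < 1$, and restrict attention to the class of customers whose priority levels lie in $[a, b]$. Three quantities matter: (i) the arrival rate of class members is $\rho(b-a)$, since priorities are IID $U([0,1])$ and arrivals are Poisson at rate $\rho$; (ii) the steady-state expected number of class members present is $\int_a^b m(p)\,dp$ by Corollary~\ref{cor:x}; and (iii) the average sojourn time of a class member is $\frac{1}{b-a}\int_a^b s(p)\,dp$, because conditional on class membership the priority is uniform on $[a, b]$. Little's Law then yields the integral identity
\begin{equation}
\int_a^b m(p)\,dp = \rho(b-a)\cdot\frac{1}{b-a}\int_a^b s(p)\,dp = \rho\int_a^b s(p)\,dp.
\end{equation}
Because $m(\cdot)$ is continuous on the open stable region, shrinking $b \downarrow p$ (or invoking the Lebesgue differentiation theorem) gives $s(p) = m(p)/\rho = 1/(1 - (1-p)\rho)^2$ as claimed.

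For the unstable case $(1-p)\rho \geq 1$, I would invoke the M/M/1 equivalence established in the proof of Theorem~\ref{thrm:X_bar}: a tagged customer with priority $p$ can only depart once all customers with priority in $(p, 1]$ have cleared, and the latter population evolves as an unstable M/M/1 queue that does not drain in finite expected time. Therefore $s(p) = \infty$.

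The main obstacle is the rigorous invocation of Little's Law in this measure-valued setting and the passage from the integral identity to a pointwise formula for $s(p)$. The first point is handled by the fact that each priority class is determined by a static exogenous tag, so the class forms a bona fide subsystem with a well-defined arrival rate, occupancy, and per-customer sojourn time; the second point is handled by the continuity of $m(\cdot)$ on the stable region, which upgrades the integral identity to a pointwise equality without additional work.
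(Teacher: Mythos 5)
Your argument is correct and follows the same essential route as the paper's proof: apply Little's Law to a priority class in steady state and then differentiate the resulting integral identity to recover $s(p)$ pointwise. The cosmetic difference is which class you tag. The paper applies Little's Law to the self-contained ``top'' class $(p,1]$, whose occupancy $\E[\bar X(p)]$ comes directly from Theorem~\ref{thrm:X_bar}, obtaining $\rho\int_p^1 s(q)\,dq = \E[\bar X(p)]$ and then differentiating in $p$. You instead apply Little's Law to an arbitrary stable interval $[a,b]$ using $\mu([a,b]) = \int_a^b m(p)\,dp$ from Corollary~\ref{cor:x}, and then shrink the interval; since $m(\cdot)$ was itself obtained by differentiating $\E[\bar X(\cdot)]$, your identity $\int_a^b m = \rho\int_a^b s$ is algebraically the same as the paper's, just localized, so the shrinking step is exactly the paper's ``differentiating gives us the result.'' Note that a priority slab $[a,b]$ with $b<1$ is not a self-contained subsystem (it is influenced by customers in $(b,1]$), but Little's Law only requires a well-defined flow and stationarity, not autonomy, so this is fine. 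For the unstable case your direct argument---a tagged customer must wait out an unstable M/M/1 whose drain time has infinite expectation---is a clean alternative to the paper's implicit appeal to $\int_p^1 s(q)\,dq=\infty$; the latter requires an additional observation (e.g.\ monotonicity of $s$) to pass from a divergent integral to a pointwise $s(p)=\infty$, which the paper leaves unstated. Your explicit flag of the integral-to-pointwise regularity issue in the stable case is also a point the paper glosses over; invoking monotonicity of $s(\cdot)$ (or continuity, which follows from the closed form once established) suffices to close it.
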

\begin{proof}
  Let $\bar S(p)$ be the average sojourn time for customers with priority
  levels in $(p, 1]$. Since the priority levels are uniformly
  distributed, the law of total probability tells us that
  \begin{equation}
    \bar S(p) = \int_p^1 s(q) \frac{1}{1 - p} dq.
  \end{equation}
  Because customers with priority levels in $(p, 1]$ arrive at a rate
  $(1-p)\rho$, Little's Law \cite{Little_1961} tells us that
  \begin{equation}
    \E[\bar X(p)] = (1-p)\rho \bar S(p) = \rho \int_p^1 s(q) dq
  \end{equation}
  which yields the following:
  \begin{equation}
      \int_p^1 s(q)dq %
      = \left\{
        \begin{array}{cl}
          \frac{1-p}{1 - (1-p)\rho} &, (1-p)\rho < 1\\
          \infty &, (1-p)\rho \geq 1
        \end{array}
      \right.
  \end{equation}
  Differentiating gives us the result.
\end{proof}

\begin{corollary}
  Fix any $p \in [0, 1]$ and let $w(p)$ be the expected waiting time
  for a customer with priority $p$ in steady state to receive
  service. Then if $(1-p)\rho < 1$ then
  \begin{equation}
    w(p) = s(p) - 1 = \frac{1}{(1 - (1-p)\rho)^2} - 1
  \end{equation}
  and if $(1-p)\rho \geq 1$ then $w(p) = \infty$.
\end{corollary}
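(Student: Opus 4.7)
The plan is to relate the sojourn and waiting times through the duration of the final service period, and then to use the memorylessness of the exponential service requirement to pin down the expected length of that final period.

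From the two definitions, sojourn time minus waiting time equals precisely the interval from the last entry into service until departure, i.e., the duration of the final (completing) service period. Taking expectations in steady state gives $s(p) = w(p) + \E[L]$, where $L$ denotes the duration of that final service period for a typical priority-$p$ customer.

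For the stable case $(1-p)\rho < 1$, the claimed formula $w(p) = s(p)-1$ reduces to showing $\E[L]=1$. I would argue this via memorylessness: each time the customer (re-)enters service, its residual service requirement is $\mathrm{Exp}(1)$ with mean $1$, independent of the earlier preemption history. In its final service period the customer completes this remaining work uninterrupted, so $L$ equals that residual, giving $\E[L]=1$. Substituting the expression for $s(p)$ from the previous theorem then yields the stated formula $1/(1-(1-p)\rho)^2 - 1$.

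For the unstable case $(1-p)\rho \geq 1$, the previous theorem gives $s(p)=\infty$. Combining this with the decomposition $s(p)=w(p)+\E[L]$ and the finiteness of $\E[L]$ (again by memorylessness) forces $w(p)=\infty$.

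The main obstacle is justifying cleanly that $\E[L]=1$: one has to be careful that restricting attention to the specific service period in which the customer ultimately completes does not bias the distribution of the residual service requirement away from $\mathrm{Exp}(1)$. The key observation is that the residual at any entry into service is simply the leftover of the original unit-mean exponential requirement, so by memorylessness its marginal distribution is $\mathrm{Exp}(1)$ regardless of the downstream preemption behavior that determines which entry happens to be the last.
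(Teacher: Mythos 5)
Your decomposition of the sojourn time into waiting time plus the length of the final service period is the same one the paper's proof uses, and the remaining step (substitute the formula for $s(p)$ from the preceding theorem) is also identical, so the overall approach matches. The gap is in your claim that the expected length $\E[L]$ of the final service period equals $1$. You correctly flag the concern that singling out the service period in which the customer ultimately completes might bias the residual, but the resolution you offer does not actually address it. Memorylessness ensures the residual $R$ at the start of a service period is $\mathrm{Exp}(1)$ conditionally on the \emph{past}; but ``this period is the last one'' is a \emph{future} event for that period, and it is not independent of $R$. Concretely, during a service period the next preemption occurs after an independent $\mathrm{Exp}((1-p)\rho)$ time $T$, and the period is terminal exactly on $\{R < T\}$. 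Conditioning on $\{R < T\}$ does bias $R$: $R \mid \{R < T\}$ is $\mathrm{Exp}(1+(1-p)\rho)$, with mean $1/(1+(1-p)\rho) < 1$. So under the literal reading of the definition of waiting time, $s(p) - w(p) = 1/(1+(1-p)\rho)$ rather than $1$. The paper's own (terse) argument sidesteps this by identifying $s(p) - w(p)$ with the \emph{total} service requirement, whose mean is $1$ by the unit service rate; that amounts to reading ``waiting time'' as the total time the customer spends not in service across all preemption cycles, not the time up to the start of the last service period. Either adopt that reading explicitly, or, if you keep the literal definition, the formula must be corrected to $w(p) = s(p) - 1/(1+(1-p)\rho)$.
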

\begin{proof}
  The sojourn time is the sum of the waiting time and the service
  time. Since we have a unit service rate, we merely subtract 1 from
  $s(p)$ to get $w(p)$.
\end{proof}

\begin{remark}
  The functions $m(\cdot)$, $s(\cdot)$, and $w(\cdot)$ define key
  performance metrics for the system. As expected, each of these
  functions is decreasing: higher priority customers wait less and see
  a smaller backlog than lower priority customers. Moreover, if we
  ignore some constant factors, each the functions decays as
  $1/p^2$. Hence, we see that the benefits of having higher priority
  grow quadratically. For instance, consider a customer with priority
  level 1.0 and a customer with priority level 0.5. The higher
  priority customer only has twice the priority of the lower priority
  customer but the higher priorty customer has an expected sojourn
  time that is roughly a quarter of the lower priority customer's
  expected sojourn time.
\end{remark}

\begin{remark}
  If $\rho \geq 1$ then all of these results exhibit a bifurcation,
  i.e. a qualititative change in behavior, at
  \begin{equation}
    p^* = 1 - \frac{1}{\rho}.
  \end{equation}
  It is intuitive that when the server is overloaded, lower priority
  customers will be ignored so that higher priority customers can be
  served. The quantity $p^*$ makes this intuition precise: when the
  queue is overloaded, customers with priority levels in $[0, p^*]$
  will have infinite expected waiting times while customers in $(p^*,
  1]$ will have finite expected waiting times.
\end{remark}

\begin{remark}
  Because of the aforementioned birfurcation, the case of $\rho = 1$
  is particularly interesting. We know that when $\rho = 1$ the
  $M/M/1$ is unstable. However, since $p^* = 0$, all customers with
  priority levels in $(0, 1]$ have a finite sojourn time and only customers
  with priority levels equal to zero have infinite sojourn times. This
  seems a bit paradoxical: the queue is unstable but almost every
  customer has a finite sojourn time. This counterintuitive result
  arises because $\rho = 1$ is the critical point between a stable
  $M/M/1$ queue and an unstable $M/M/1$ queue.
\end{remark}

\section{Simulation Verification\label{sec:sim}}
In this section, we report the results of two discrete event
simulations of the system: one with $\rho < 1$ and one with $\rho \geq
1$.  In both cases, we simulate for a time horizon of $T = 10^4$ and
use the simulated data to estimate $m(\cdot)$, $s(\cdot)$, and
$w(\cdot)$. In general, we see that the estimates match our
theoretical results, thus supporting our analysis.

\subsection{Estimation Methods}
We first outline our estimation methods. For each of the functions
that we estimate, we take a non-parametric approach: first we get
local estimates and we then linearly interpolate to estimate the
entire function. The details for each function are outlined below.

We compute our estimate of $m(\cdot)$, which we denote $\hat
m(\cdot)$, as follows:
\begin{enumerate}
\item Because of the PASTA property \cite{Wolff_1982}, we record
  $x_t(\cdot)$ as observed immediately before a new arrival.
\item For $p_i \in \set{0.025 + 0.05i}_{i=0}^{19}$, we average the
  number of customers with priority levels in the half-open interval
  $[p_i - 0.025, p_i + 0.025)$ across our observations and scale this
  average by $1/0.05$. This gives us $\hat m(p_i)$.
\item We linearly interpolate $\set{m(p_i)}_{i=0}^{19}$ to get a
complete estimate of $m(\cdot)$.
\end{enumerate}

We compute our estimate of $s(\cdot)$, which we denote $\hat
s(\cdot)$, in a similar fashion:
\begin{enumerate}
\item We record the arrival time, the departure time, and the priority
  level of each customer. If a customer does not depart, then his
  departure time is infinite.
\item For $p_i \in \set{0.025 + 0.05i}_{i=0}^{19}$, we average the
  sojourn times for customers with priority levels in the half-open
  interval $[p_i - 0.025, p_i + 0.025)$. This gives us $\hat s(p_i)$.
\item We linearly interpolate $\set{s(p_i)}_{i=0}^{19}$ to get a
  complete estimate of $s(\cdot)$.
\end{enumerate}

We compute our estimate of $w(\cdot)$, which we denote $\hat
w(\cdot)$, in a similar fashion:
\begin{enumerate}
\item We record the arrival time and the priority level of each
  customer. We also record the last time that the customer enters
  service before departing. If the customer never departs them this
  time is infinite.
\item For $p_i \in \set{0.025 + 0.05i}_{i=0}^{19}$, we average the
  waiting times for customers with priority levels in the half-open
  interval $[p_i - 0.025, p_i + 0.025)$. This gives us $\hat w(p_i)$.
\item We linearly interpolate $\set{w(p_i)}_{i=0}^{19}$ to get a
  complete estimate of $w(\cdot)$.
\end{enumerate}

\subsection{Estimation Results}
First consider a simulation for which $\rho = 0.75$. In this case, the
queue is stable and so $m(\cdot)$, $s(\cdot)$, and $w(\cdot)$ are
finite. The results are plotted in Fig.~\ref{fig:stable}. We see that
for $m(\cdot)$, $s(\cdot)$, and $w(\cdot)$, the estimates agree with
our theoretical analysis. Moreover, if we see that $\hat s(\cdot)$ and
$\hat w(\cdot)$ have the same shape and merely differ by a
constant. This confirms our previous analysis regarding the mean
equilibrium behavior of $x_t(\cdot)$, the expected sojourn time, and
the expected waiting time.

\begin{figure}
  \begin{center}
    \subfloat[$\hat m(\cdot)$ vs. $m(\cdot)$\label{fig:m}]{
    \includegraphics[width=\columnwidth]{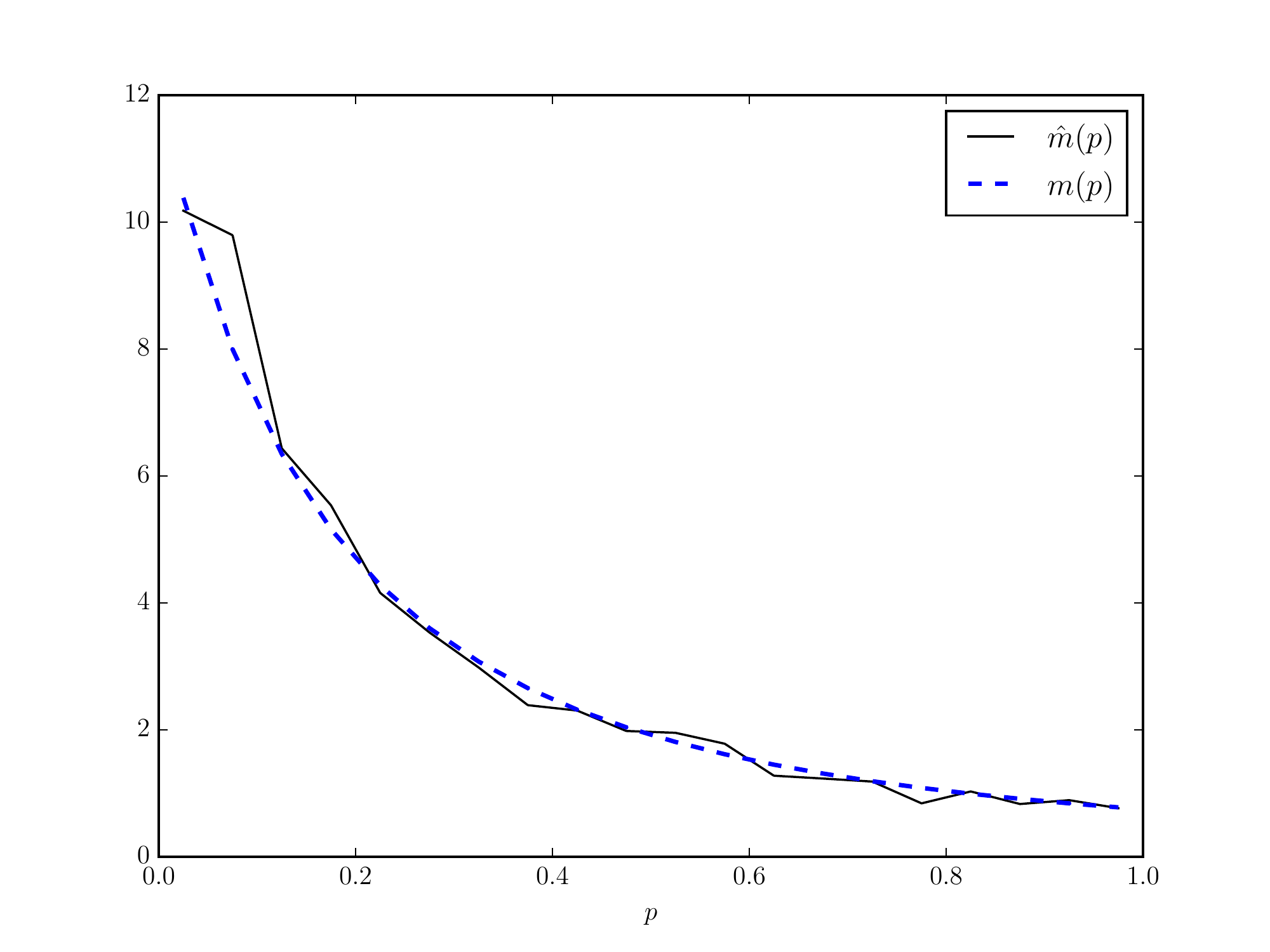}}\\
    \subfloat[$\hat s(\cdot)$ vs. $s(\cdot)$\label{fig:s}]{
    \includegraphics[width=\columnwidth]{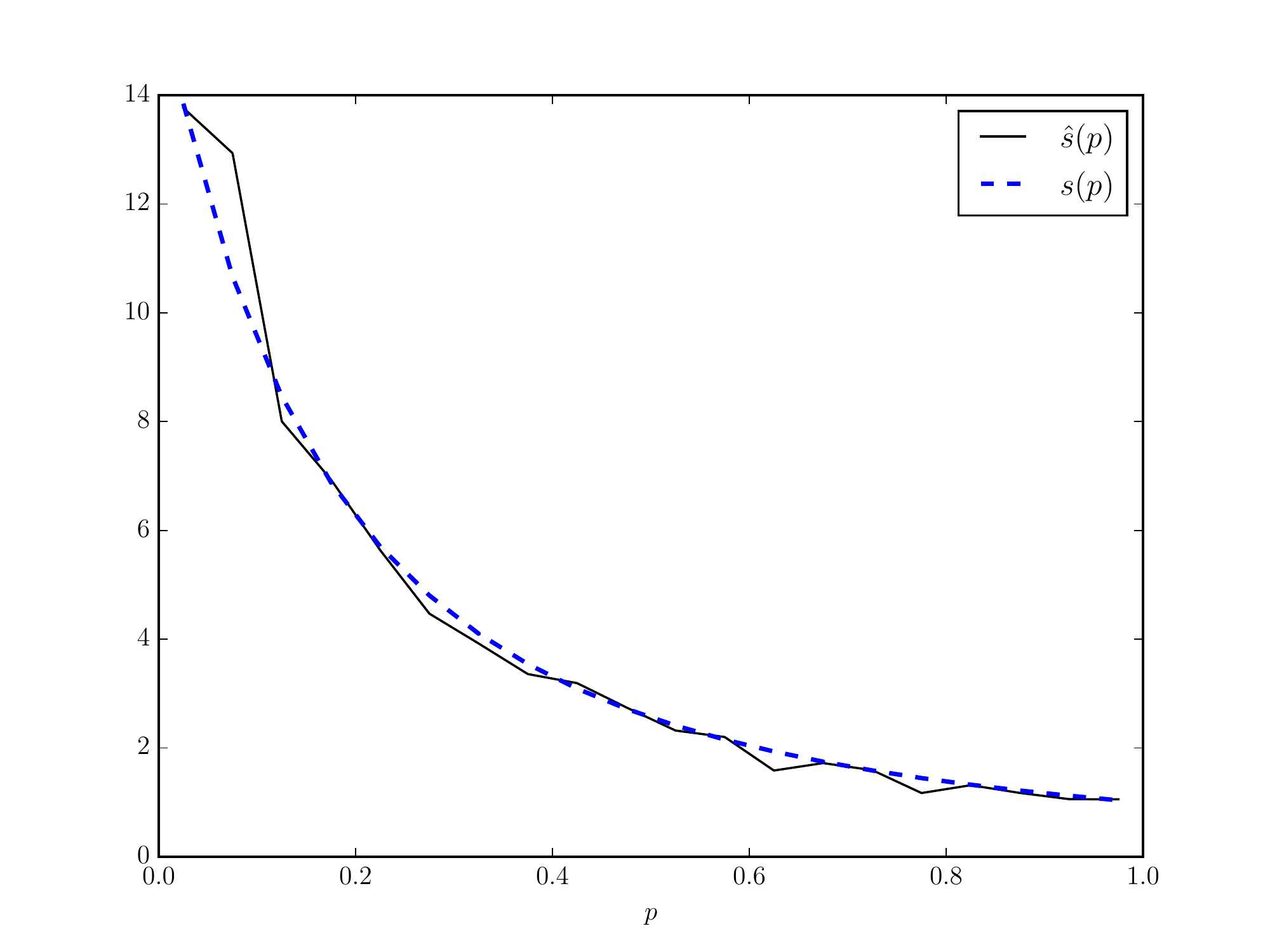}}\\
    \subfloat[$\hat w(\cdot)$ vs. $w(\cdot)$\label{fig:w}]{
    \includegraphics[width=\columnwidth]{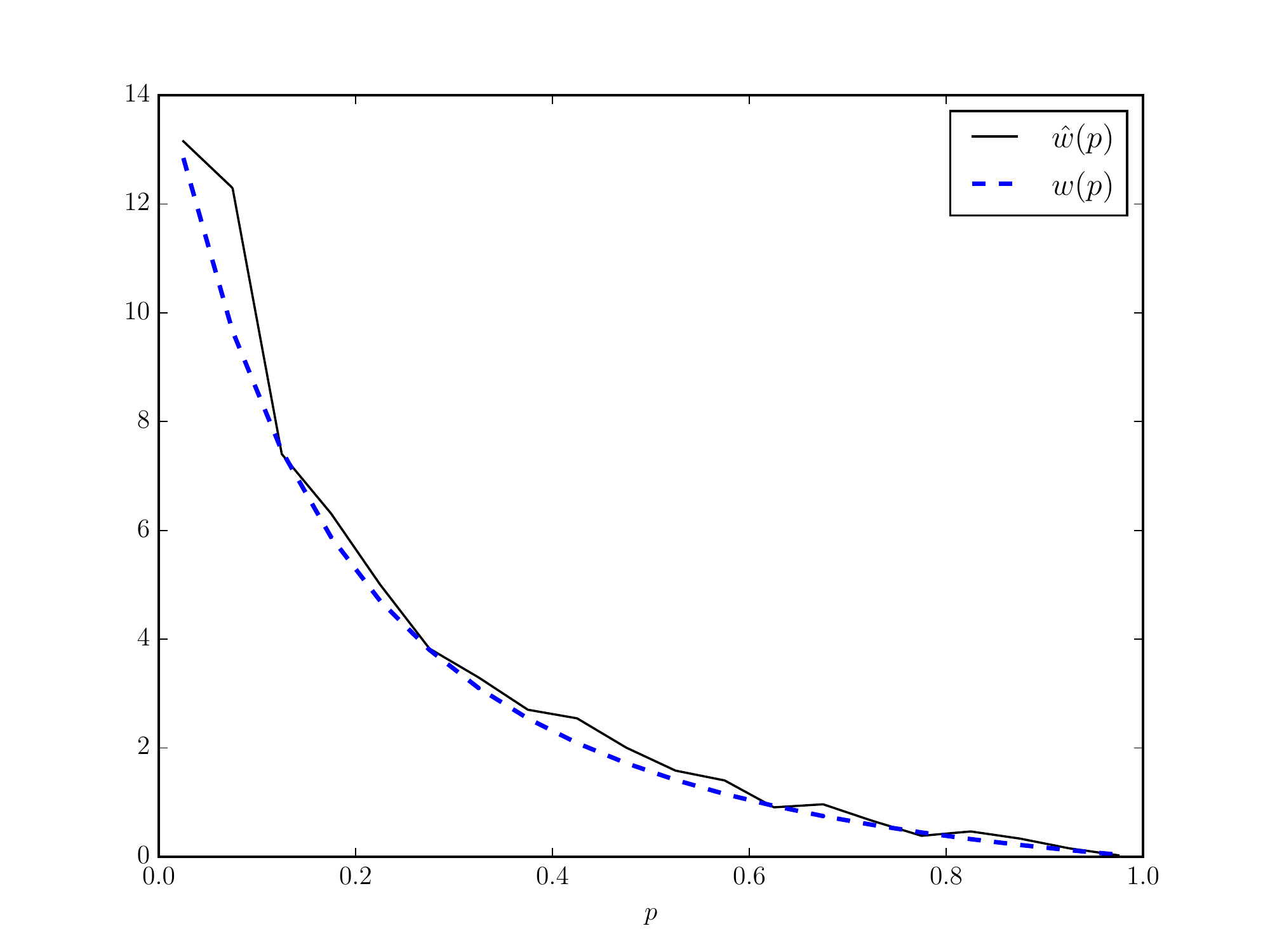}}
  \end{center}
  \caption{Estimates of $m(\cdot)$, $s(\cdot)$, and $w(\cdot)$ based
    on the data generated by simulating the system with $\rho = 0.75$
    for a horizon of $T = 10^4$ time units. For this value of $\rho$,
    the queue is stable and so we use a linear scale for both
    axes.\\\label{fig:stable}}
\end{figure}

Now consider a simulation for which $\rho = 1.25$. In this case, the
queue is not stable and so $m(\cdot)$, $s(\cdot)$, and $w(\cdot)$ are
finite only for $p \in (p^*, 1] = (0.2, 1]$. As a result, we do not
plot the functions for $p < p^*$. Because of the vertical asymptote at
$p^*$, we use a log-scale for the vertical axis. The results are
plotted in Fig.~\ref{fig:unstable}. In Fig.~\ref{fig:m_unstable}, we
see that $\hat m(\cdot)$ and $m(\cdot)$ seem to agree on $(p^*,
1]$. Fig.~\ref{fig:m_unstable} also depicts the bifurcation at
$p^*$. We see that for $p \in \set{p_i}_{i=0}^3$, $\hat m(p)$ is
roughly 10 times the value of $\hat m(p_4)$. This is because $p_3 <
p^*$ while $p_4 > p^*$. We see similar results regarding $\hat
s(\cdot)$ in Fig.~\ref{fig:s_unstable}. For $p \in (p^*, 1]$, $\hat
s(p)$ and $s(p)$ agree. Note that for $p < p^*$, neither $\hat s(p)$
nor $s(p)$ appear on the plot. This is because both quantities are
infinite. Hence, we see that $\hat s(\cdot)$ and $s(\cdot)$ agree for
all $p \in [0, 1]$. We see the same results for $\hat w(\cdot)$: the
estimate agrees with the analytic result where both are finite and
also where both are infinite.

\begin{figure}
  \begin{center}
    \subfloat[$\hat m(\cdot)$ vs. $m(\cdot)$\label{fig:m_unstable}]{
    \includegraphics[width=\columnwidth]{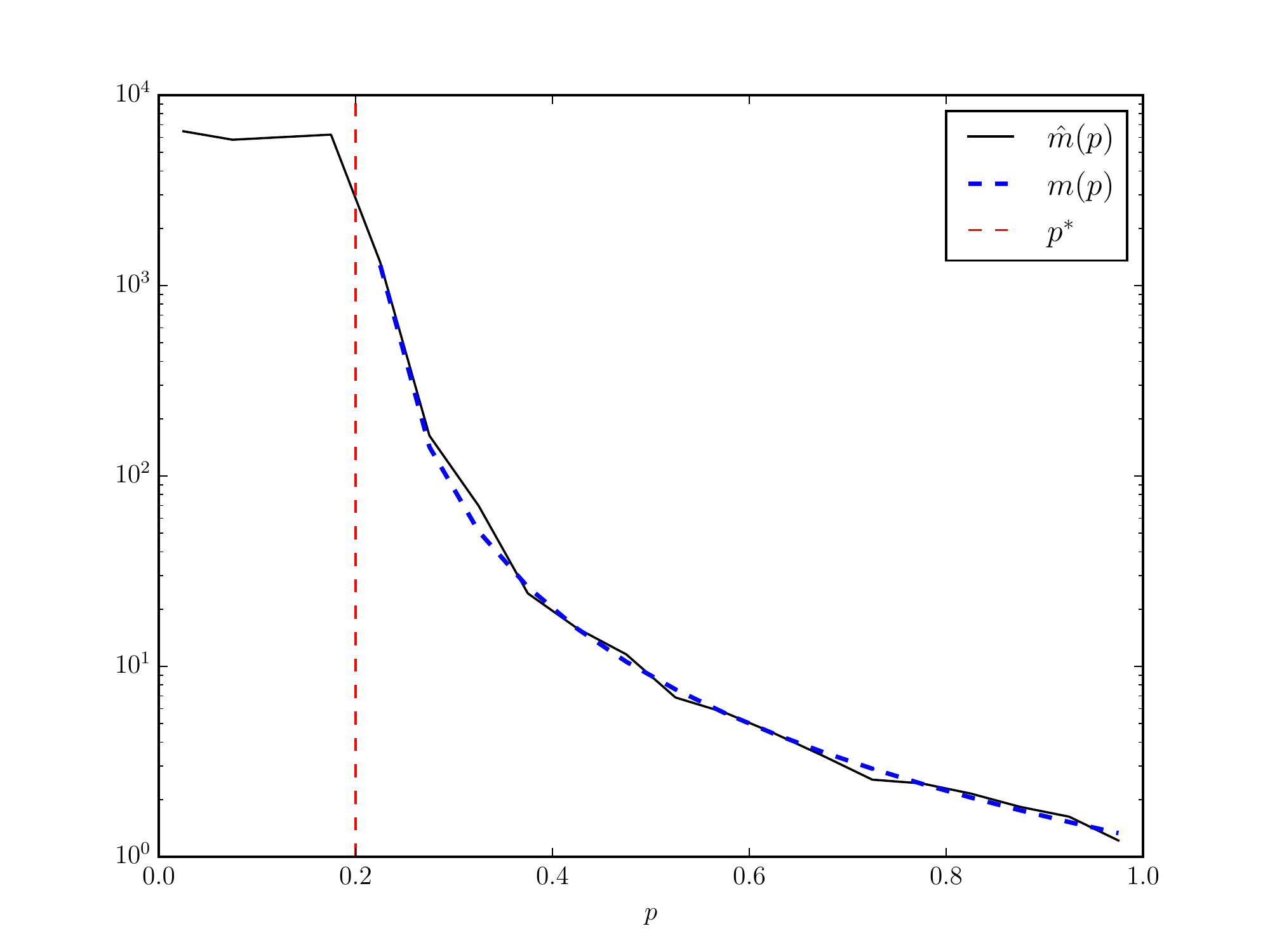}}\\
    \subfloat[$\hat s(\cdot)$ vs. $s(\cdot)$\label{fig:s_unstable}]{
    \includegraphics[width=\columnwidth]{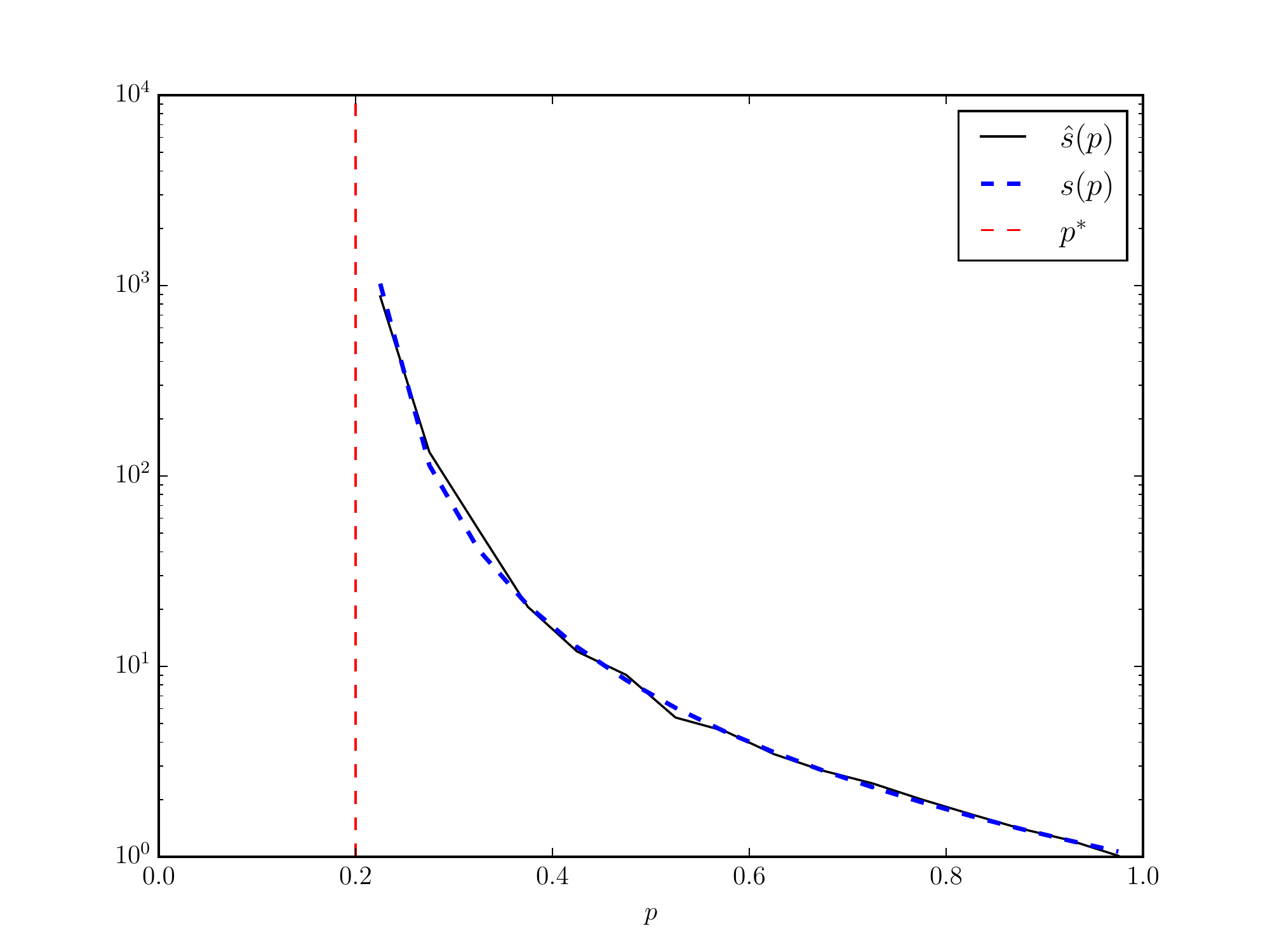}}\\
    \subfloat[$\hat w(\cdot)$ vs. $w(\cdot)$\label{fig:w_unstable}]{
    \includegraphics[width=\columnwidth]{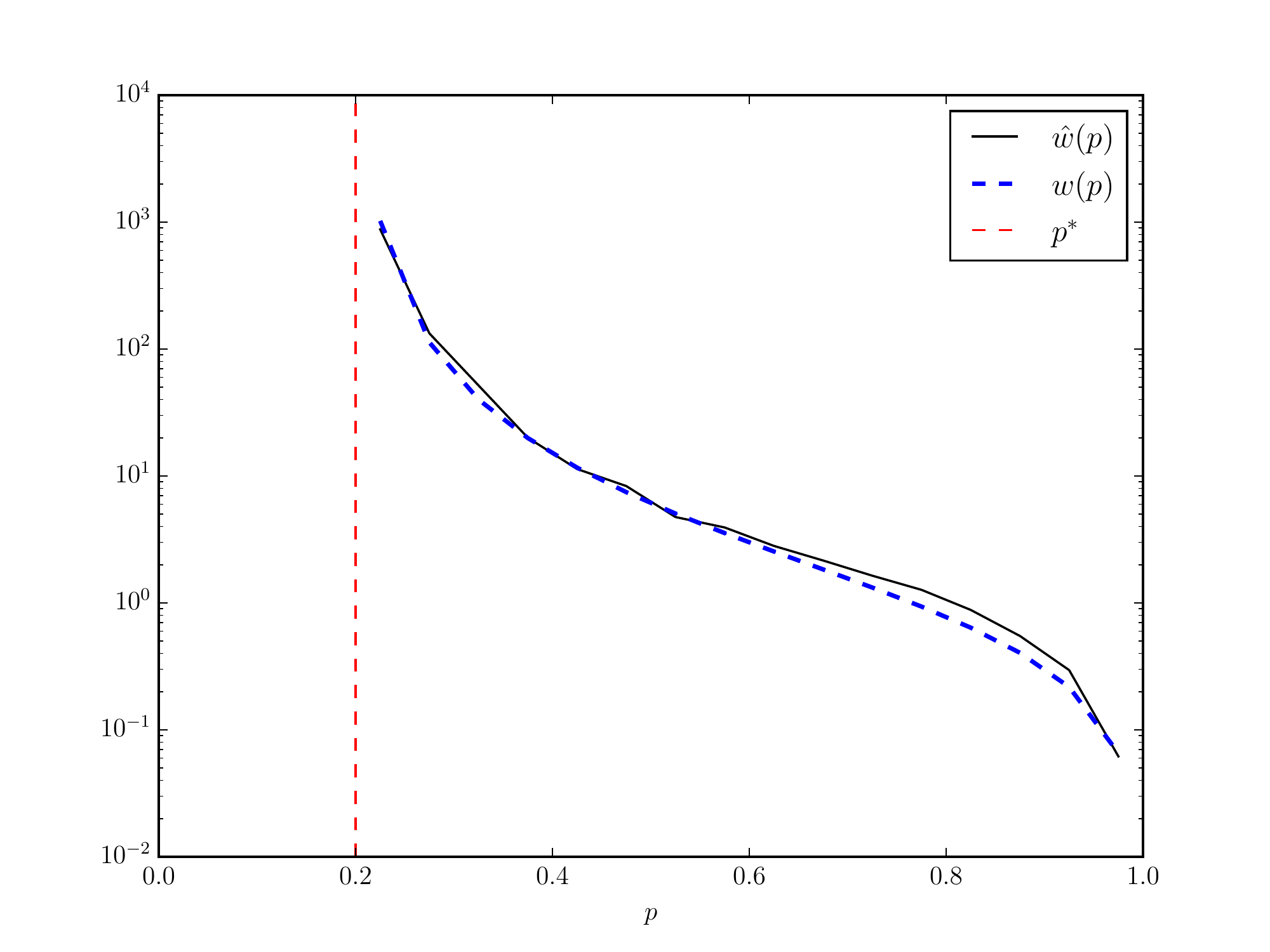}}
  \end{center}
  \caption{Estimates of $m(\cdot)$, $s(\cdot)$, and $w(\cdot)$ based
    on the data generated by simulating the system with $\rho = 1.25$
    for a horizon of $T = 10^4$ time units. Because the queue is
    unstable, the values of the functions become quite large and hence
    we opt to use a logarithmic vertical axis.\label{fig:unstable}}
\end{figure}

\section{Future Work\label{sec:future}}
There are several potential avenues of future work. One is derive more
results about this particular model. In particular, deriving results
regarding the higher order statistics of $x(\cdot)$ would be
interesting but it is not immediately clear how to do this given the
present results. 

It would also be interesting to extend this model to networks of
queues. With a single queue, the state is a point measure on $[0,1]$
but if there are $n$ queues connected in a network, the state would be
a point measure on $[0, 1]^n$. It seems likely that the steady state
would have a product-form similar to Jackson's
Theorem~\cite{Jackson_1963}, but the details of the analysis are not
immediately clear.

Another direction of future modeling work would be to consider how
continuous priority levels affect queues with many servers. The style
of analysis would be similar and it seems reasonable that there are
analogous results that can be be derived.

Finally, we mention that heavy traffic analysis may yield some
interesting results. Priority queues are a canonical example of a
system that exhibits ``state-space collapse'' in heavy traffic
\cite{Reiman_1984}. Indeed, if upon appropriate rescaling, we would
see that the diffusion limit associated with $X_t(p)$ for $p < p^*$
would be zero. However, in may be possible to take $\rho \uparrow
\infty$ in such a way that $p^* \downarrow 0$ and we have are left
with a non-trivial measure-valued diffusion. This idea is not yet well
developed but because our analysis applied to overloaded queues, a
heavy traffic limit is a natural idea to consider.

\section{Conclusions\label{sec:conclusions}}
We have presented an infinite dimensional model for a single server
priority queue in which customers' priority levels are drawn from a
continuous probability distribution. Our steady state analysis
characterizes the mean behavior of the measure-valued process that
describes the priority levels of the customers in the queue. We have
also provided formulae for the expected sojourn and waiting times of
customers as function of their priority levels. When the queue is
overloaded, all of these analytical results exibit a bifurcation
around a critical priority level. In particular, customers with
priority levels strictly larger than this critical level will have
finite expected sojourn times while customers with priority levels
less than or equal to this critical level will have infinite expected
sojourn times. To further bolster this analysis, we have also
presented some simulations that agree with our formulae. We have also
discussed some directions of future work.

\bibliographystyle{ieeetr}
\bibliography{NMaster_ZZhou_NBambos_ACC2017}

\end{document}